\documentclass[11pt,reqno]{amsart}
\usepackage[margin=1.1in]{geometry}
\usepackage[T1]{fontenc}\usepackage[utf8]{inputenc}
\usepackage{lmodern,amsmath,amssymb,booktabs,array,microtype,needspace}
\usepackage[hidelinks]{hyperref}
\hypersetup{pdfauthor={Enkai Zhang},pdftitle={The additive-square-free spectrum below 139}}
\newtheorem{theorem}{Theorem}
\newtheorem{proposition}[theorem]{Proposition}
\newtheorem{lemma}[theorem]{Lemma}
\theoremstyle{remark}\newtheorem{remark}[theorem]{Remark}
\newcommand{\HH}{\mathcal H}
\newcommand{\Z}{\mathbb Z}
\title[The additive-square-free spectrum below 139]{The additive-square-free spectrum below 139}
\author{Enkai Zhang}
\address{University of Toronto Scarborough, Toronto, Ontario, Canada}
\email{ek.zhang@mail.utoronto.ca}
\subjclass[2020]{68R15, 11B75, 05D10}
\keywords{additive square, abelian square, four-letter alphabet, exhaustive search, real parameters}
\date{}
\providecommand{\wordentry}[2]{\par\Needspace{6\baselineskip}\noindent\textbf{#1}\par\smallskip\noindent{\small\ttfamily #2}\par\medskip}
\providecommand{\planeentry}[3]{\par\Needspace{3\baselineskip}\noindent\makebox[2.4em][l]{\small$W_{#1}$}{\small\ttfamily#2}\\\hspace*{2.4em}{\small\ttfamily#3}\par\vspace{1pt}}
\providecommand{\MainPrefix}{}
\providecommand{\SuppPrefix}{}
\hypersetup{hypertexnames=false}
\usepackage{amsthm}
\begin{document}
\hypertarget{article-start}{}
\renewcommand{\SuppPrefix}{Supplement }

\begin{abstract}
For a finite real alphabet $T$, let $g(T)$ be the supremum of the lengths of
words over $T$ containing no two adjacent nonempty blocks of equal length and equal
sum. We classify all four-element alphabets with $g(T)<139$. Up to affine
equivalence, these are the known endpoint-sum family and seven isolated
classes. Exhaustive finite searches give the isolated values, while 77 explicit
words prove that every remaining real alphabet admits a word of length 139.
The latter argument uses exact intersections of finite unions of lines, so it
covers irrational parameters as well. The cutoff is sharp because
$g(\{0,2,3,7\})=139$. We also obtain a sharp cutoff of 167 within the family
$\{0,1,x,x+2\}$, $x>1$. The required supplement prints the word certificates
and specifies the complete-search evidence.
\end{abstract}
\maketitle

\section{The classification}
An \emph{alphabet} is a finite set of real numbers, and a \emph{word} is a
finite sequence of its elements. A \emph{factor} is a contiguous block.
Write $|W|$ for the length of a word and $\sum W$ for the sum of its letters.
An \emph{additive square} is a factor $UV$ such that
$|U|=|V|>0$ and $\sum U=\sum V$. For example, $03\,12$ is an additive square,
since its two length-two blocks both sum to three. The blocks need not be
identical or have the same letter multiplicities.

Put
\[
 g(T)=\sup\{|W|:W\text{ contains no additive square and is a word over }T\},
\]
allowing $g(T)=\infty$. Thus $g(T)=n<\infty$ means that some word of length
$n$ avoids additive squares, but every word of length $n+1$ contains one.
A witness of length $n$ proves only the lower bound; the upper bound requires
excluding every word of length $n+1$.

Write $T\sim T'$ if $T'=\{\mu t+\nu:t\in T\}$ for real $\mu\ne0$ and $\nu$.
This \emph{affine equivalence} includes reflection and preserves $g$:
translation contributes equally to two blocks of equal length, and nonzero
scaling preserves equality of their sums. Define the endpoint-sum family
\[
 \HH=\bigl\{\{a,b,c,d\}:a<b<c<d,\ a+d=b+c\bigr\}.
\]
For an integer alphabet beginning at zero, \emph{primitive} means that the
nonzero letters have greatest common divisor one. Its diameter is its largest
letter. These conventions remove redundant integer rescalings.

\Needspace{18\baselineskip}
\begin{theorem}\label{M-thm:main}
For every four-element real alphabet $T$, the complete classification below
139 is as follows.
\begin{center}
\begin{tabular}{@{}ll@{}}
\toprule
Condition & $g(T)$\\\midrule
$T\sim\{0,1,2,3\}$&$50$\\
$T\sim\{0,1,3,4\},\{0,1,4,5\},\{0,2,3,5\}$&$55$\\
$T\sim\{0,3,4,7\}$&$58$\\
$T\in\HH$, excluding the preceding classes&$60$\\
$T\sim\{0,1,2,4\}$&$62$\\
$T\sim\{0,1,2,5\}$&$86$\\
$T\sim\{0,1,3,5\}$&$88$\\
$T\sim\{0,1,3,6\}$&$110$\\
$T\sim\{0,1,2,6\}$&$111$\\
$T\sim\{0,2,3,6\}$&$113$\\
$T\sim\{0,1,4,6\}$&$118$\\
Every other four-element real alphabet&$\ge139$\\\bottomrule
\end{tabular}
\end{center}
The last bound is attained: $g(\{0,2,3,7\})=139$.
\end{theorem}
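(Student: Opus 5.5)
By affine equivalence, I will normalize every four-element alphabet to $T=\{0,1,x,y\}$ with $1<x<y$, after possibly reflecting. Reflection sends this to $\{0,y-x,y-1,y\}$, which renormalizes to $\{0,1,(y-1)/(y-x),y/(y-x)\}$, so I may also restrict to a fundamental domain in the open region $1<x<y$ of the plane.

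The key observation is that whether a fixed word $W$ over the abstract four-letter alphabet $\{a,b,c,d\}$ avoids additive squares, after substituting $a=0$, $b=1$, $c=x$, $d=y$, depends only on $(x,y)$ through a finite set of linear conditions. Each pair of adjacent equal-length blocks gives a difference of sums of the form $\alpha+\beta x+\gamma y$ with integer coefficients. The word is square-free at $(x,y)$ exactly when none of these finitely many affine forms vanishes there. The forms that vanish identically (all coefficients zero) make $W$ bad everywhere; I will choose words that avoid these. The remaining forms cut out a finite union $L(W)$ of lines in the plane.

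\textbf{Lower bound $\ge 139$ outside the listed classes.} I will exhibit finitely many words $W_1,\dots,W_{77}$ of length $139$. Each has no identically vanishing form, and together they satisfy
\[\bigcap_i L(W_i)\cap\{1<x<y\}\subseteq \mathcal{E},\]
where $\mathcal E$ is the exceptional set. This set consists of the line $y=x+1$ (the family $\HH$ in this normalization) together with the finitely many points corresponding, under the normalization and reflection, to the isolated classes in the table. Since each $L(W_i)$ is a finite union of lines, the intersection is computable exactly. One intersects line by line: intersecting the full line sets of two generic words leaves finitely many points plus the lines common to both, and further words remove the leftovers. Everything is exact rational arithmetic, so irrational $(x,y)$ are covered automatically. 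I expect to find the words by randomized backtracking at generic rational parameters and at the troublesome leftover points or lines. Then I intersect greedily until only $\mathcal E$ survives.

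\textbf{Exact values.} For each isolated class and for $\{0,2,3,7\}$, the value of $g$ comes from an exhaustive depth-first search over integer words. Backtracking checks only the squares ending at the last letter, and it is pruned by the reversal and affine symmetries. This search yields a witness of length $n$ and the certificate that no word of length $n+1$ exists. For the family $\HH$, normalized as $\{0,1,x,x+1\}$, the same line argument applies on a one-parameter line. A fixed word of length $60$ works except at finitely many values of $x$, and those values are handled by further words or are the listed classes $\{0,1,2,3\},\{0,1,3,4\},\{0,1,4,5\},\{0,2,3,5\},\{0,3,4,7\}$ (in normalized form). The upper bound $g\le 60$ on $\HH$ is the harder part, since it must hold uniformly in real $x$. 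I would take it from the known result on the endpoint-sum family, or prove it by a search over symbolic words. Such a search records a square as present only when its form vanishes identically, which works because $a+d=b+c$ forces enough identical coincidences. The same symbolic search gives the upper bounds $\le 60$ at the isolated classes inside $\HH$, where the values are smaller.

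\textbf{Main obstacle.} I expect the hardest step to be constructing the covering family of words near the many degenerate rational points of small height. At those points a large number of forms vanish simultaneously, and long square-free words become scarce. I will handle them by listing the leftover points after each intersection and searching directly at each point for a length-$139$ word. I will verify the sharpness claim $g(\{0,2,3,7\})=139$ by the complete search, which is computationally the heaviest single search.
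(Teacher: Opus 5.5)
Your proposal is correct and follows the paper's proof essentially step for step: normalization to $1<x<y$ with reflection, exact failure sets given by finite unions of lines, a 77-word cover of the complement of $H\cup E$ computed in exact rational arithmetic (only the inclusion you state is needed for the theorem), complete prefix searches for the isolated values and for $\{0,2,3,7\}$, and the Freedman--Brown classification on $\HH$. One caveat on your fallback for $\HH$: a symbolic search would only give the uniform bound $60$, so the values $50,55,58$ would still need their own searches, which is why citing the published classification, as the paper does, is the right choice.
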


The first four rows are the classification of Freedman and Brown
\cite[Theorem 1]{M-FB16}. The value 62 has a prior public computational record
\cite{M-P26}. Outside $\HH$, the seven isolated classes are exactly the normalized
primitive alphabets $\{0,a,b,c\}$ with $c\le6$, up to reflection.
Section~\ref{M-sec:finite} gives their finite-search results. Sections 3--5 prove that
there are no other alphabets below the cutoff: 77 fixed words cover the
complement of the listed classes throughout the real parameter plane.
Theorem~\ref{M-thm:line} treats a particular one-parameter family below 167;
it is not a classification of the whole plane at that larger cutoff.

\paragraph{Previous work.}
An \emph{abelian square} consists of adjacent blocks with equal multiplicities
of every letter. It is an additive square under every numerical assignment.
The existence of an infinite additive-square-free word over a finite integer
alphabet was asked in \cite{M-PV94,M-HH00} and remains open.
Related work treats approximate repetitions and additive complexity
\cite{M-ABJS12,M-Bro12}, and additive cubes, which consist of three adjacent blocks
of equal length and sum \cite{M-CCSS14,M-Rao15,M-LR20}. Cube avoidance does not
imply square avoidance. Infinite square avoidance is possible on suitable
real alphabets via abelian-square avoidance \cite{M-Ker92,M-FB16}, and on a vector
alphabet in $\Z^2$ \cite{M-RR18}.
Freedman and Brown \cite{M-FB16} initiated the finite extremal problem, classified
$\HH$, and showed $g\ge51$ elsewhere. Further context appears in
\cite{M-AM25,M-V26}. Our result extends the finite classification; it does
not decide the infinite-avoidance problem for integer alphabets.

\section{Finite extremal values}\label{M-sec:finite}
We consider $\{0,a,b,c\}$ with $0<a<b<c$ and $\gcd(a,b,c)=1$.
Reflection $x\mapsto c-x$ is the only remaining nontrivial affine equivalence
between normalized primitive integer sets. In particular, $c$ is invariant
within such a class.

\Needspace{13\baselineskip}
\begin{proposition}\label{M-prop:layers}\label{M-prop:small}\label{M-prop:three}
Outside $\HH$, the normalized primitive classes through diameter seven have
the following values, up to reflection.
\begin{center}\small
\begin{tabular}{@{}cl@{}}\toprule
$c$&Alphabets and values of $g$\\\midrule
$4$&$\{0,1,2,4\}:62$\\
$5$&$\{0,1,2,5\}:86;\quad\{0,1,3,5\}:88$\\
$6$&$\{0,1,3,6\}:110;\quad\{0,1,2,6\}:111$\\
 &$\{0,2,3,6\}:113;\quad\{0,1,4,6\}:118$\\
$7$&$\{0,2,3,7\}:139;\quad\{0,1,3,7\}:141;\quad\{0,2,4,7\}:145$\\
 &$\{0,1,2,7\}:148;\quad\{0,1,4,7\}:161;\quad\{0,1,5,7\}:167$\\\bottomrule
\end{tabular}\end{center}
\end{proposition}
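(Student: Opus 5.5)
The proposition is a finite computational statement, so the plan is to verify it by exhaustive search, alphabet by alphabet. The work splits into three parts: enumerate the alphabets, compute $g$ for each, and make each computation certifiable.

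\emph{Enumeration.} I first list all normalized primitive alphabets $\{0,a,b,c\}$ with $0<a<b<c\le7$ and $\gcd(a,b,c)=1$. I then discard those with $a+c=b$, the members of $\HH$ (for example $\{0,1,3,4\}$ and $\{0,3,4,7\}$). Finally I identify each remaining alphabet with its reflection $\{0,c-b,c-a,c\}$; by the remark preceding the proposition this is the only residual equivalence. This step is routine bookkeeping. It should yield exactly the one, two, four and six classes listed for $c=4,5,6,7$. I would check by hand, for instance, that $\{0,1,2,4\}$ is the reflection class of $\{0,2,3,4\}$ and that $\{0,1,2,3\}$ lies in $\HH$, so it is absent from the table.

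\emph{Computation of $g$.} For each class I run a depth-first backtracking search over words in the four letters. A word is extended one letter at a time, and the extension is kept only if no new additive square ends at the last position. The test is cheap: with prefix sums $P_i$, appending position $n$ creates an additive square exactly when $P_n-P_{n-k}=P_{n-k}-P_{n-2k}$ for some $k\le n/2$. So each extension costs $O(n)$ integer comparisons.

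Symmetry reduces the tree:
\begin{itemize}
\item Reversal preserves additive-square-freeness, so I canonicalize under reversal where convenient.
\item The first letter may be restricted using the reflection symmetry of the class, when the alphabet is self-reflective.
\end{itemize}

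The output for each alphabet consists of two parts:
\begin{itemize}
\item an explicit square-free word of the claimed length $n$, giving the lower bound $g\ge n$;
\item the statement that the search tree has no node at depth $n+1$, giving the upper bound $g\le n$.
\end{itemize}
The lower-bound words are independently checkable in $O(n^2)$ time and would be printed in the supplement.

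\emph{Main obstacle.} The hard part is making the upper bounds, an exhaustive exclusion of all words of length $n+1$, both feasible and trustworthy for the $c=7$ row, where $n$ reaches $167$. The raw tree is large, so I would first try the following.
\begin{itemize}
\item Additional pruning that is provably safe. If every word of length $m$ extending a given factor is dead, that factor can be forbidden. I would precompute a table of forbidden short factors by a preliminary search, and also exploit the fact that the tree's dead ends appear early.
\item Two independent implementations, for example one in C with bitset prefix-sum checks and one simple reference implementation, with node counts per depth recorded. Agreement of these counts is the evidence specified in the supplement.
\end{itemize}
The values for $c\le6$, and $62$ in particular, can also be cross-checked against the prior record \cite{M-P26}.
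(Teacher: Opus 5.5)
Your plan is the paper's proof: an exhaustive left-to-right search that rejects an extension exactly when $S_n-2S_{n-k}+S_{n-2k}=0$ for some $1\le k\le\lfloor n/2\rfloor$, a printed witness for $g\ge n$, an empty layer at depth $n+1$ for $g\le n$, and independently written programs as a cross-check (the paper uses no extra pruning, and it stresses that the certificate is the completed search rather than agreement of counts). Two small corrections: for $\{0,a,b,c\}$ the $\HH$ condition is $a+b=c$, not $a+c=b$ (your own examples $\{0,1,3,4\}$ and $\{0,3,4,7\}$ show this), and your first-letter reduction for self-reflective alphabets never applies here, because $\{0,a,b,c\}$ is invariant under $x\mapsto c-x$ exactly when $a+b=c$, so reversal is the only available symmetry.
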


\begin{proof}
The search begins with the empty word and tries each possible next letter.
If an extension of a word avoiding additive squares creates one, the new
occurrence must end at the appended letter. With prefix sums $S_i$, the last
two blocks of length $k$ have equal sum exactly when
\begin{equation}
 S_n-2S_{n-k}+S_{n-2k}=0.\label{M-eq:suffix}
\end{equation}
Reject an extension if this holds for some $1\le k\le\lfloor n/2\rfloor$;
otherwise continue recursively. Induction on length shows that this procedure
visits every word avoiding additive squares, and no others.

For each alphabet, the completed search has a surviving word at the
listed length $g$ and no survivor at length $g+1$. The witnesses in
Supplement S1 prove the lower bounds; the empty final layers prove
the upper bounds, since every longer avoiding word would have an
avoiding prefix in that layer. Supplement S5 specifies the complete
runs and their subtree partitions.

The value 62 and its two mutually reversed maximizers were reported in
\cite{M-P26}; the search here reobtains them. For the values 86 and 88,
separately written implementations also exhaust all 3372 subtrees rooted at
avoiding prefixes of length eight. Their totals agree with the single-process
search. The search invariant and the completion of every subtree, rather than
agreement of totals alone, establish exhaustiveness.

Every $c=6,7$
search was completed by a graphics-processor breadth-first implementation
and at least one separately written CPU depth-first program using
\eqref{M-eq:suffix}. Supplement S5 reports visit counts and distinguishes
single-process counts from parallel totals. Supplement S1 prints all thirteen
maximum-length witnesses. This proves every entry for the stated
normalized classes through diameter seven.
\end{proof}

\section{Normalization and failure sets}
Every ordered alphabet $T=\{a,b,c,d\}$, $a<b<c<d$, is positively affinely
equivalent to
\[
 T_{x,y}=\{0,1,x,y\},\qquad
 x=\frac{c-a}{b-a},\quad y=\frac{d-a}{b-a},\qquad
 D=\{(x,y):1<x<y\}.
\]
The family $\HH$ becomes the line $H:1+x-y=0$ in $D$.
Reflection gives the only other normalized representative of the same class:
\[
 R(x,y)=\left(\frac{y-1}{y-x},\frac{y}{y-x}\right).
\]
\Needspace{12\baselineskip}
The seven isolated classes therefore give the following fourteen-point set $E$.
\begin{center}
\begin{tabular}{@{}ll@{}}\toprule
Class&Normalized representatives\\\midrule
$\{0,1,2,4\}$&$(2,4),(3/2,2)$\\
$\{0,1,2,5\}$&$(2,5),(4/3,5/3)$\\
$\{0,1,3,5\}$&$(3,5),(2,5/2)$\\
$\{0,1,3,6\}$&$(3,6),(5/3,2)$\\
$\{0,1,2,6\}$&$(2,6),(5/4,3/2)$\\
$\{0,2,3,6\}$&$(3/2,3),(4/3,2)$\\
$\{0,1,4,6\}$&$(4,6),(5/2,3)$\\\bottomrule
\end{tabular}
\end{center}
All fourteen points are distinct and lie outside $H$. Enumerating
$1\le a<b<c\le6$ with $\gcd(a,b,c)=1$ and $a+b\ne c$ gives precisely these
seven classes after reflection.

Following Freedman and Brown \cite[proofs of Theorems 1 and 2]{M-FB16}, fix a word
$W$ over labels $0,1,2,3$, interpreted as $0,1,x,y$. For adjacent blocks $U,V$
of equal positive length, let
$\delta_i=|U|_i-|V|_i$, where $|U|_i$ counts occurrences of label $i$.
Then $\sum_i\delta_i=0$, and equality of the block sums is exactly
\begin{equation}
 \delta_1+\delta_2x+\delta_3y=0.\label{M-eq:line}
\end{equation}
If the count-difference vector is zero, the pair is an abelian square and
$W$ fails for every assignment; such a candidate word is rejected.
Otherwise \eqref{M-eq:line} is a nonzero affine equation. A nonzero constant
has no zero, and only genuine lines intersecting $D$ contribute to the failure
set. Our use of this established reduction is to intersect the exact failure
sets of many words over the full two-dimensional domain.

\begin{lemma}\label{M-lem:domain}\label{M-lem:sign}
The zero set of a nonzero equation \eqref{M-eq:line} meets $D$ if and only if
\begin{equation}
 (\delta_1+\delta_2+\delta_3,\ \delta_2+\delta_3,\ \delta_3)
 \label{M-eq:gaps}
\end{equation}
has both a positive and a negative entry.
\end{lemma}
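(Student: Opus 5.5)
The plan is to change coordinates so that the domain $D$ becomes a product of open half-lines and the linear form becomes a combination with positive coefficients. Put $u=x-1>0$ and $v=y-x>0$, so that $(u,v)$ ranges over the open positive quadrant exactly when $(x,y)$ ranges over $D$. Substituting $x=1+u$ and $y=1+u+v$ into \eqref{M-eq:line} gives
\[
 \delta_1+\delta_2x+\delta_3y=(\delta_1+\delta_2+\delta_3)+(\delta_2+\delta_3)u+\delta_3v .
\]
Write $(A,B,C)$ for the triple \eqref{M-eq:gaps}. The zero set meets $D$ if and only if the affine function $f(u,v)=A+Bu+Cv$ vanishes at some point with $u,v>0$. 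Note that $A,B,C$ determine $\delta_1,\delta_2,\delta_3$ invertibly, so the equation is nonzero if and only if $(A,B,C)\neq 0$.

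\emph{Necessity.} Suppose all entries are $\ge0$, and not all are zero. Then $f(u,v)=A+Bu+Cv$ on the open quadrant is a sum of nonnegative terms. Each term is strictly positive when its coefficient is nonzero, because $u,v>0$ and the constant $1$ is positive. Hence $f>0$ there, and there is no zero. The case where all entries are $\le0$ follows by negating.

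\emph{Sufficiency.} Suppose some coefficient is positive and another is negative. View $f$ as $A\cdot1+Bu+Cv$ evaluated at a point $(1,u,v)$ with all coordinates positive; after rescaling, this is the same as asking for a positive vector $(t,u,v)$ with $At+Bu+Cv=0$. With one strictly positive and one strictly negative coefficient, choose the two corresponding coordinates to balance each other, and set the remaining coordinate to a small $\varepsilon>0$. Then adjust one of the balancing coordinates by a small amount to absorb the $\varepsilon$ term. This gives a strictly positive solution, and dividing by $t$ yields $(u,v)$ in the open quadrant.

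The only step needing care is this sufficiency construction, specifically keeping all three coordinates strictly positive. For example, if $A>0>B$, take $t=1$, $v=\varepsilon$ and $u=(A+C\varepsilon)/(-B)$, which is positive for small $\varepsilon$; the other sign patterns are symmetric. Translating back via $x=1+u$ and $y=1+u+v$ completes the proof.
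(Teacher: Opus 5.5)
Your proof is correct and is essentially the paper's argument: your homogeneous coordinates $(t,u,v)$ play the role of the paper's positive consecutive gaps $(p,q,r)$. Both reduce to the fact that a nonzero linear form vanishes at a strictly positive vector exactly when its coefficients take both signs, and then dehomogenize (the paper divides by $p$, you set $t=1$); your explicit $\varepsilon$-construction simply spells out the balancing step that the paper states tersely.
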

\begin{proof}
For positive consecutive gaps $p,q,r$, the four letters are
$0,p,p+q,p+q+r$. The block-sum difference is the dot product of
\eqref{M-eq:gaps} with $(p,q,r)$. A nonzero linear form vanishes at a strictly
positive vector exactly when its coefficients have both signs: balance the
positive and negative contributions, and assign arbitrary positive values to
coordinates with zero coefficient. Dividing by $p$ gives a point of $D$.
\end{proof}

For an abelian-square-free label word, let $\mathcal L(W)$ be the set of lines
passing Lemma~\ref{M-lem:domain}. Normalize each integer coefficient triple by
its gcd and by making its first nonzero entry positive. The exact failure set is
\begin{equation}
 B(W)=\bigcup_{L\in\mathcal L(W)}(L\cap D).\label{M-eq:failure}
\end{equation}
There are $\lfloor139^2/4\rfloor=4830$ adjacent equal-length block comparisons
in a word of length 139. Thus \eqref{M-eq:failure} is a finite exact description,
not merely a list of sufficient obstructions.

\section{A cover of the remaining real parameters}
\begin{proposition}\label{M-prop:cover}
The 77 words $W_1,\ldots,W_{77}$ of length 139 in Supplement S2 satisfy
\begin{equation}
 \bigcap_{i=1}^{77}B(W_i)=(H\cap D)\cup E.\label{M-eq:cover}
\end{equation}
\end{proposition}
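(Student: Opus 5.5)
The plan is to verify \eqref{M-eq:cover} by exact rational computation on the finite line arrangement determined by the 77 words. First, for each $W_i$ I scan all $4830$ adjacent equal-length block pairs. I check that no count-difference vector vanishes, which makes $W_i$ abelian-square-free so that $B(W_i)$ is defined. I keep the triples passing Lemma~\ref{M-lem:domain} and normalize them as in the text. This gives a finite set $\mathcal L(W_i)$ of lines with integer coefficients, and $B(W_i)$ is exactly as in \eqref{M-eq:failure}.

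The inclusion $\supseteq$ comes first and is cheap. Every point of $H\cap D$ and every point of $E$ corresponds to an alphabet with $g<139$, by Theorem~\ref{M-thm:main}'s first rows (Freedman--Brown) and Proposition~\ref{M-prop:layers}. So every word of length 139 contains an additive square there. Since $B(W)$ is the exact failure set, those points lie in every $B(W_i)$. As a consistency check, and to avoid relying on the theorem being proved, I also confirm directly that $H\in\mathcal L(W_i)$ for all $i$ (the triple $(1,1,-1)$ up to normalization), and that each of the fourteen points of $E$ lies on some line of each $\mathcal L(W_i)$.

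For $\subseteq$, I intersect iteratively. I set $X_1=B(W_1)$, and at each step I maintain $X_j=\bigcap_{i\le j}B(W_i)$ as a finite union of line pieces (segments of lines in $D$) together with a finite set of isolated points. To intersect with $B(W_{j+1})$:
\begin{itemize}
\item each line $L$ surviving in $X_j$ is kept whole if $L\in\mathcal L(W_{j+1})$;
\item otherwise $L$ is replaced by its finitely many intersection points with lines of $\mathcal L(W_{j+1})$, restricted to $D$;
\item each isolated point is kept if and only if it satisfies some equation \eqref{M-eq:line} from $W_{j+1}$.
\end{itemize}
All intersections are solved with rational arithmetic, so irrational parameters are covered automatically. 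Any point of $X_{j+1}$ off the surviving lines lies on two distinct rational lines and is therefore rational.

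The goal is that after 77 steps the only surviving line is $H$ and the surviving points are exactly $E$. The main obstacle is keeping the point set manageable. Two distinct lines of the first words meet in up to roughly $10^6$--$10^7$ candidate points, and collinear or concurrent coincidences must be handled exactly.

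I would order the words so that after the first two or three, $X_j$ already contains no line except $H$. This means choosing words whose line sets share only $H$. After that, the computation reduces to testing a finite rational point set against each subsequent word by direct evaluation, which is fast. A point on the boundary of $D$ is excluded by strict inequalities. A candidate point lying on $H$ is absorbed into $H\cap D$.

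Finally, I check that the final residual point set equals $E$ exactly, using the table above. That table is legitimate because those points are forced by the $\supseteq$ argument, and any extra survivor would signal that more words are needed.
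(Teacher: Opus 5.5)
Your proposal is correct and takes essentially the paper's own approach: build each exact line set $\mathcal L(W_i)$, check failure on $H$ and at the fourteen points of $E$ directly, then intersect the finite unions of lines one word at a time in exact rational arithmetic, keeping common lines whole and replacing each other line by its intersection points with the new word's lines. The differences are cosmetic: the paper works in $D\setminus(H\cup E)$ from the start instead of carrying $H$ and $E$ through the computation, and in the printed order the last residual line disappears after the tenth word, leaving 125 isolated points that the remaining 67 words eliminate.
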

\begin{proof}
For every listed word, compute all block-count differences, reject a zero
vector, and form the line set by Lemma~\ref{M-lem:domain}. The check verifies
that every word fails on $H$ and at all fourteen points of $E$.
For the converse, work in $D\setminus(H\cup E)$ and represent the current
intersection as whole lines together with isolated points. Start with the
first word's line union. When intersecting with the next word's line union,
retain common lines, retain existing points on at least one new line, and
add intersections of old and new distinct lines. Keep only points in the
working domain, and remove points already represented by retained lines.

For $a+bx+cy=0$ and $d+ex+fy=0$, a unique intersection exists when $bf-ce\ne0$,
and is
\begin{equation}
 x=\frac{cd-af}{bf-ce},\qquad y=\frac{ae-bd}{bf-ce}.\label{M-eq:intersection}
\end{equation}
Distinct parallel lines contribute nothing; coincident lines are retained.
These rules preserve the exact represented set at each step and require only
integer and rational arithmetic.

The supplied independently implemented checker performs all
$77\cdot4830=371{,}910$ comparisons and the stated set operations.
After the tenth word no whole residual line remains, and 125 isolated points
remain. The remaining 67 words remove all of them. Supplement S2 prints the
complete words and the intermediate line and point counts. Together with the
checks on $H\cup E$, this proves \eqref{M-eq:cover}.
\end{proof}

\begin{remark}
The argument includes irrational parameters. Whole lines are retained as
whole lines. Only the intersection of two distinct rational lines is replaced
by a rational point, as justified by \eqref{M-eq:intersection}. No restriction to
rational alphabets is assumed. We do not claim that 77 is the smallest
possible size of such a cover.
\end{remark}

\begin{remark}[The cutoff]\label{M-rem:cutoff}
With $H\cup E$ as the fixed exception set, 139 is the largest length for which
a cover is possible: $\{0,2,3,7\}$ lies outside that set and has exact value
139. A cover at length 140 would therefore need at least this additional
exception. Conversely, prefixes of the same words cover the complement at
every length from 119 to 139; the listed exceptions still fail because their
exact values are at most 118.
\end{remark}

\section{Proof of the classification}
\begin{proof}[Proof of Theorem~\ref{M-thm:main}]
Normalize $T$ to a point in $D$. On $H$, apply the published classification
\cite{M-FB16}. At a point of $E$, use Proposition~\ref{M-prop:layers}
and affine invariance.
At every other point, Proposition~\ref{M-prop:cover} gives a word of length
139 avoiding additive squares. These cases exhaust the real parameter domain.
Every listed finite value has a representative, so the spectrum below 139 is
\[
 \{50,55,58,60,62,86,88,110,111,113,118\}.
\]
The complete search at $\{0,2,3,7\}$ gives value 139 outside the exception
set and establishes sharpness.
\end{proof}

The proof has three separate inputs: the published endpoint-sum
classification, complete finite searches for the isolated upper bounds, and
the exact real-parameter cover for the complementary lower bound.
The cover does not replace the searches, and the printed long words do not
by themselves establish any global upper bound.

\section{The family $\{0,1,x,x+2\}$}
The endpoint-sum family has equal outer gaps. We now fix instead
\[
 T_x=\{0,1,x,x+2\},\qquad x>1.
\]
In an original alphabet $a<b<c<d$, this means $d-c=2(b-a)$.
Reflection gives the equivalent formulation with outer-gap ratio one half.

\Needspace{9\baselineskip}
\begin{theorem}\label{M-thm:line}\label{M-thm:gap-two}
For $x>1$,
\[
\begin{array}{c|ccccc}
x&2&3&4&3/2&\text{all other }x>1\\\hline
g(T_x)&62&88&118&139&\ge167.
\end{array}
\]
The last bound is sharp, since $g(T_5)=g(\{0,1,5,7\})=167$.
Thus the finite spectrum below 167 on this line is exactly
$\{62,88,118,139\}$.
\end{theorem}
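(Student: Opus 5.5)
The plan has the same three inputs as the proof of Theorem~\ref{M-thm:main}, restricted to the line $y=x+2$ in $D$. First I identify the special values. We have $T_2=\{0,1,2,4\}$, $T_3=\{0,1,3,5\}$ and $T_4=\{0,1,4,6\}$. The case $x=3/2$ gives $\{0,1,3/2,7/2\}\sim\{0,2,3,7\}$ after doubling, and $T_5=\{0,1,5,7\}$. Proposition~\ref{M-prop:layers} and affine invariance then give the values $62,88,118,139$ and $g(T_5)=167$. This settles the table entries and the sharpness claim.

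Next I check that no other point of the line needs separate treatment. The line $y=x+2$ is parallel to $H$ ($y=x+1$), so it never meets $H$. The only points of $E$ on it are $(2,4),(3,5),(4,6)$. The reflections $(3/2,2),(2,5/2),(5/2,3)$ have $y-x\ne2$. For any other $x>1$ it remains to exhibit an additive-square-free word of length 167 over $T_x$.

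For the lower bound I would use the failure-set machinery of Section~3 in one variable. For a label word $W$ that is abelian-square-free, each line in $\mathcal L(W)$ restricts to $y=x+2$ as the equation $\delta_1+2\delta_3+(\delta_2+\delta_3)x=0$. This is either identically satisfied (whole-line failure), has no solution, or has one rational root. Lemma~\ref{M-lem:domain} decides whether that root lies in $x>1$. So the failure set of $W$ on the line is either the whole ray or a finite set of rationals, computable exactly.

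I would then choose finitely many words of length 167, for example seeded from the maximizers for $\{0,1,5,7\}$, $\{0,1,4,7\}$ and other large-$g$ points on the line, plus words found by searches at sample rational $x$. I would verify that no chosen word fails identically on the line. Intersecting their finite failure sets should leave exactly $\{3/2,2,3,4\}$; irrational $x$ are automatically covered because every failure point is rational. Each word must also be checked to fail at these four values, which is forced by the known values of $g$.

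The main obstacle is producing the certificate words. Near the exceptional values, and near $x=5$ where the bound is sharp, the words must avoid squares at every nearby rational root. I would first run a depth-first search at several generic rationals such as $x=5/2,7/3,6,10$ and collect the words found. I would then greedily intersect their root sets and add targeted searches at each surviving rational until only the four exceptions remain.
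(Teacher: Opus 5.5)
Your proposal follows the paper's proof. The values at $x=2,3,4,3/2$ and $g(T_5)=167$ come from Proposition~\ref{M-prop:layers} via exactly your identifications, and every other $x>1$ is covered by length-167 label words whose comparisons $(\delta_1+2\delta_3)+(\delta_2+\delta_3)x$ never vanish identically and whose finite rational root sets intersect in exactly $\{3/2,2,3,4\}$. The paper realizes this with the four explicit words $V_0,\dots,V_3$ of Supplement S3, and that certificate, which your search would still have to produce, is what actually carries the lower bound.

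One small correction: Lemma~\ref{M-lem:domain} only tells you whether a line meets $D$, not whether it meets the ray $y=x+2$, $x>1$. For example, the line $3+2x-y=0$ meets $D$ but crosses $y=x+2$ at $x=-1$. So test $-A/B>1$ directly, as the paper does; over-including such roots would be harmless anyway.
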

\begin{proof}
Use the four length-167 words $V_0,V_1,V_2,V_3$ in Supplement S3.
For adjacent equal-length blocks, the sum difference is
\[
 (\delta_1+2\delta_3)+(\delta_2+\delta_3)x=A+Bx.
\]
All 6972 comparisons in each word have $(A,B)\ne(0,0)$.
A nonzero constant has no root; otherwise the unique failure parameter is
$x=-A/B$, retained only when $x>1$.
The four exact root sets, printed with the words in Supplement S3, have
intersection $\{3/2,2,3,4\}$. Hence every other real $x>1$, including every
irrational one, admits at least one of the four avoiding words.
This check uses all 27,888 comparisons and exact rational root lists.

The four exceptions are affinely equivalent, respectively, to
$\{0,2,3,7\}$, $\{0,1,2,4\}$, $\{0,1,3,5\}$ and $\{0,1,4,6\}$.
Their exact values, and the value 167 at $x=5$, are the complete-search
results of Proposition~\ref{M-prop:layers}. They supply the exceptional upper
bounds and sharpness; the four long words supply only the complementary
lower bound.
\end{proof}

\paragraph{Supplement and data.}
The supplement prints all 94 word certificates. Sections S4--S5 specify
the exact parameter checks and the completed searches for the upper bounds.
The accompanying archive contains the data, programs and execution instructions.

\paragraph{Use of generative AI}
Generative AI tools, including OpenAI Codex, assisted with mathematical
analysis, research programming, literature review and manuscript preparation.
The author has reviewed the manuscript and takes responsibility for its content.

\enlargethispage{2\baselineskip}

\paragraph{Electronic evidence.}
The computational data and verification programs accompany the preprint at
\url{https://doi.org/10.5281/zenodo.22559145}.

\clearpage
\hypertarget{supplement-start}{}
\setcounter{section}{0}\setcounter{subsection}{0}
\setcounter{equation}{0}
\setcounter{table}{0}\setcounter{figure}{0}
\renewcommand{\MainPrefix}{Main }\renewcommand{\SuppPrefix}{}
\section*{Technical supplement}

This supplement supplies the finite words used in the main article and records
the computations on which its exact finite values depend. Direct word checks
prove lower bounds. Exact intersections of failure sets prove coverage of the
real parameter domains. Upper bounds on the maximum word length require the
separate complete searches described in Section S5. The accompanying electronic
archive contains those programs and execution records.

\renewcommand{\thesection}{S\arabic{section}}

\paragraph{How the evidence fits together.}
The plane classification uses three inputs: the published endpoint-sum
classification, complete finite searches for the seven isolated classes, and
the exact 77-word cover of their complement. Sharpness of the cutoff also
uses the complete search giving 139. The line classification uses its four-word
cover and the exact values $62,88,118,139,167$. The word-cover checks do not
replace these complete-search upper bounds. Neither classification decides
the existence of an infinite avoiding word on a finite integer alphabet.

\section{Words attaining the listed finite values}\label{S-app:witnesses}
Each word below is written using the actual integers in its alphabet. Read its
rows consecutively. The displayed word has length $g$, contains no additive
square, and cannot be extended by a single letter without creating one.
Its existence proves $g(T)\ge g$; right-maximality does not rule out a different,
longer word. The matching upper bound is supplied by a complete search.
\wordentry{$T=\{0,1,2,4\}$, $g=62$}{4024142042120121420214240424142012420102014101242102\\4140421020}
\wordentry{$T=\{0,1,2,5\}$, $g=86$}{2025152025201020515251021251020105010215121025105152\\0215250525152010510121015052105020}
\wordentry{$T=\{0,1,3,5\}$, $g=88$}{5135051531013153505131510503050103101503050153151035\\315350535150103051530510130105030501}
\wordentry{$T=\{0,1,3,6\}$, $g=110$}{0106010360610131601636010363101601030106036061030106\\1636106036061636160131606306013160103010603631606316\\106163}
\wordentry{$T=\{0,1,2,6\}$, $g=111$}{0126010601262060102010610216260262120121606206120626\\1626061060201261606212021601201060102062162602010602\\6160102}
\wordentry{$T=\{0,2,3,6\}$, $g=113$}{0326320362602623063626360620603020623603623202602030\\2060363206030262026360623036062636260262306323060230\\626360636}
\wordentry{$T=\{0,1,4,6\}$, $g=118$}{0401046160641406040104064160464146404104060104101614\\1041460401416460401464064616460401606461601041460401\\06101404601041}
\wordentry{$T=\{0,2,3,7\}$, $g=139$}{7303732370727323020702032027232030702737270230373237\\3073727072732303732737027232030702707327202320270203\\02070372702723203073727073723020727}
\wordentry{$T=\{0,1,3,7\}$, $g=141$}{1737071070307010317107037071737170103137313013171013\\0107010307313707107310313710171370317370737170171317\\1070371013010703707137307017073070171}
\wordentry{$T=\{0,2,4,7\}$, $g=145$}{2024720727472702042027020424742040702742702474074727\\4707207047274074240207274207020402072420424720724704\\74270274720704070204202740474270724720704}
\wordentry{$T=\{0,1,2,7\}$, $g=148$}{1720107012172120121727072712702102071720701070201202\\7121020701020727172707121021272072717270710702721270\\20107010217012027217270717072010272101210701}
\wordentry{$T=\{0,1,4,7\}$, $g=161$}{0701040171474010704717470471017141701040107047414071\\0407471747071417101714707417407010407017141710140107\\0104147010704071074714740107101417107174717074010710\\14074}
\wordentry{$T=\{0,1,5,7\}$, $g=167$}{5715101757071015101701575051017051757075717017157057\\1510171015010701050750510170105707170757101505707170\\5751757071707507101517101507050105707170757010515710\\57050105715}

\section{Seventy-seven words for the real parameter plane}\label{S-app:cover}
The word $W_j$ uses labels $0,1,2,3$, interpreted as $0,1,x,y$. It has length
139. Its two printed rows are consecutive parts of the same word; neither the
label $W_j$ nor a line break is part of it. The symbols $H$ and $E$ denote the
exceptional line and the fourteen exceptional points defined in the main article.

All words fail on $H\cup E$. Outside that set, the successive exact intersections
have the following numbers of retained whole lines and isolated points:
\begin{quote}\small
(37,0), (19,92), (11,136), (8,140), (6,125), (4,131), (3,126), (2,123), (1,123), (0,125),
(0,114), (0,106), (0,100), (0,95), (0,90), (0,86), (0,82), (0,78), (0,74), (0,71), (0,68), (0,65),
(0,62), (0,60), (0,58), (0,56), (0,54), (0,52), (0,50), (0,48), (0,46), (0,45), (0,44), (0,43),
(0,42), (0,41), (0,40), (0,39), (0,38), (0,37), (0,36), (0,35), (0,34), (0,33), (0,32), (0,31),
(0,30), (0,29), (0,28), (0,27), (0,26), (0,25), (0,24), (0,23), (0,22), (0,21), (0,20), (0,19),
(0,18), (0,17), (0,16), (0,15), (0,14), (0,13), (0,12), (0,11), (0,10), (0,9), (0,8), (0,7), (0,6),
(0,5), (0,4), (0,3), (0,2), (0,1), (0,0).
\end{quote}
No word has a zero adjacent-block count-difference vector. Thus none contains an
abelian square, which would make it fail for every numerical assignment.
\planeentry{1}{1320103012132120121323032312302102031320301030201202312102030102032313}{230312102123203231323031030232123020103010213012023213230313032010232}
\planeentry{2}{3032313202101323031020103102320212032313230310121302313010201030212023}{202103012013032313032021202321323032312103230313023010210132302321202}
\planeentry{3}{0301020131232010302313230231013121301020103023212031020323132303121310}{131230321320301020301312131012010301021230103020310323123201031012131}
\planeentry{4}{3231323032312103231302030213121032313031321012131201031021323032131012}{010301020323132032312103123203010213121012313031323102132303231320213}
\planeentry{5}{1310302101301023212030130323123202120230130231012010301020320210301020}{130323101312302132303231302123201030231210132313010203010213120232013}
\planeentry{6}{0102010301021230102031303120212301020302012321202132302312010301202321}{323032313010201301021230210130323120212321020302010231201030102013120}
\planeentry{7}{3202321230313212010301021013121020301323130120232123203231303132120232}{132301312102030130321310121013010201030231301312102032313032312010313}
\planeentry{8}{0131012103020121323032312320212313032010203213101210130102030201213101}{201032021231303203012023212320323132303102032021231301020301021323020}
\planeentry{9}{2023031232032310121303203010210131213103203102321323032313013123032313}{203010232021031201030102031210132303102123201020323130323120213101320}
\planeentry{10}{0102010301021013101213202101301020302012031231302031021232023213230310}{302010203231210203010210131012132023130201213032030102010312102032021}
\planeentry{11}{2013230321201213212010301021031231302013032303132131021231303231301020}{103021231210130102010302303101210313230323120321310120103020301323101}
\planeentry{12}{1012310313231301021013010212321020301321301232032313230310302313203212}{010313210301020103121021231031230232130132310302030102101320232130312}
\planeentry{13}{2321023020102032312320323121012313032012032101301020103023031020130121}{323020123032313230212312102302103101203201023031303231232013101203021}
\planeentry{14}{0102010301021230102031303212320102013101202321232032310210130323132302}{321013230232123202102031320210203020103132013212010323123202123130201}
\planeentry{15}{3231323032312103231302030121013231320232131012101301023123203010201031}{012320103101210131231302013123130313230201320121323010210131210203132}
\planeentry{16}{2312101323031012101301232021013021323032313013123023121013101201030102}{032021013010230313032310120230313023213230313032031012131012030201023}
\planeentry{17}{3231323032312320232120131232030103020131210121323032312320121323020121}{320210203010210131231303202101303202123212013123130323123201213230201}
\planeentry{18}{0102010301021230213101202321232032313012010301020320210313210203202123}{212013203231323031020313212023020102030130321203231323031030212303121}
\planeentry{19}{3231323032312103120232131012101301020321323032313013123020123130131210}{121320130102010302313020121310313231303203012130102010302303121030212}
\planeentry{20}{0102010301021013101213202101301020302012023101312131032301020321301312}{303231320301303231202123102010301021230210131213103132303101232021030}
\planeentry{21}{0102010301021013101213202101301020321203013032303132101213120103010201}{321021323020102030131230323132030131232021312010313230130201312023202}
\planeentry{22}{0102010301021013101213230312101232102030201031012030201023031323130102}{132102313230323121023202103102130320301020103101230232123020103121012}
\planeentry{23}{3231323032312320232120131232032313031321310232021202301032312103102013}{032030102010310123021203010212313031232023212301020131210203010302321}
\planeentry{24}{0102010301021013101213202101303231232021013212023213230323130120320213}{120103020121323032313013121310120321201030102101312131031323031030210}
\planeentry{25}{0102010301021013101213202101301020302012023101312131032301020323121031}{323032312301202320121310302010203202132303103023132303213101201030201}
\planeentry{26}{0102010301021013101213202101301020321203013121301021032132302120230213}{103132302321232030123202131210302010203212032313230310120130320121320}
\planeentry{27}{3231323032312320232120131232032313012130320212032312301201031213103120}{230201031012101303210131202123031323130121301020103023213203013212013}
\planeentry{28}{3231323032312320232120131232032313031321310120321201030231230321310312}{302010301021013121310323010212301303230313213102123203231012310302012}
\planeentry{29}{3231323032312320232120103021232101231303132302321303132310302010203231}{201231020103010212310131230231203013032313230232103101210313230212321}
\planeentry{30}{3231323032312320232120131232030102101312320121310120103010203213013120}{213230313212010301020320212023213012132303231232021202302010302303123}
\planeentry{31}{3231323032312320232120131232032313031321302102031302312101310120103023}{031323130102123130323123202321201310203132120301303231323021231301312}
\planeentry{32}{1323031030203010213103023031323130102123212012131012010301020321230310}{321021231013123021323032313013121310302310120103023031232030130320301}
\planeentry{33}{0102010301021013101213202101303230313202123212010301021013212010313212}{013123130323123202102030131232030131210121320210203010210132120103132}
\planeentry{34}{0102010301021013101213202101301020302012023101312131032301020321232023}{101213230323123202120230120323121012313010201030212021303132023212320}
\planeentry{35}{0102010301021013101213202101301020302012023101312131032301020321232023}{101213203231323031020131013231320210203020103121020323120213101210130}
\planeentry{36}{0102010301021013101213202101301020302012023121020301312101213202313230}{102013123130323123202321201030121013230232130312101302313010201301021}
\planeentry{37}{0102010301021013101213202132302012023121013010201030230310201031323130}{131210121323032313201213120103023130201030102131210123130321232101301}
\planeentry{38}{0102010301021013101213202101301020302012023213012132303102103012023021}{031323032312320212023010323121032030103020120231210130102321023031321}
\planeentry{39}{0102010301021013101213202321230131213101203020123031030201020320213012}{130321203013032303132131032021230132130102030201202321323031021310302}
\planeentry{40}{0102010301021230102031303121021232021013010203020120232021231210301021}{232032313010210131230201202321232032313031321020302123201312310201030}
\planeentry{41}{0102010301021013101213202101301020302012023101312131032301020321232023}{101312131032312320323101201303230313020103010212301030121310131232021}
\planeentry{42}{3231323032312103231302030212312101312320323130313213101312102123032312}{101301020323123202103132131012101301020302012313031210132021023132303}
\planeentry{43}{0102010301021013101213202101301020302012132023132030102101323013121021}{231302321202321301210301202321030102013032313230212030201310120230102}
\planeentry{44}{0102010301021013101213202101301020302012031231301320120232021231013123}{201203012103132303231232012132302012131012030120232021203132302120230}
\planeentry{45}{3231323032312320232120131012103202120232130313210302303132313013120321}{203012130320301030201202301312103201203231303132131012010302312023031}
\planeentry{46}{3231323032312320232120131232032313031321310212313032021232120131020103}{231320210203010210131012132303212320103101203021232031020323132032312}
\planeentry{47}{3231323032312320232120131232032313031321310232021202301032313012101310}{232120130102010302313202320102013123130313230212313010213120232123203}
\planeentry{48}{0102010301021013101213202101301020302012023101312131032301021230231320}{301303231323023210312130323121020302101310121032313202123130323031012}
\planeentry{49}{3231323032312320232120131232032313031321302102032013213101312102320210}{132130321230201030102101321201031321202321303213101312130201031213103}
\planeentry{50}{0102010301021013101213202101312320132030103020120231210130321203231303}{132130102101321202321323031321023213101323130102013101232102032013231}
\planeentry{51}{2010302303130323120230310302010203231210121321202321323032313012103023}{012312102320210312010301020320212023031023213230310302101303203013032}
\planeentry{52}{0102010301021013101213202101301023031323130102123203231323031030213230}{321310120103132030102010310121013123130102013212021232032313230201020}
\planeentry{53}{3231323032312320232120131232032313031321310232021202301032313010212302}{010301021032131013212023031323130131201030230310201030120232132303132}
\planeentry{54}{0102010301021013101213202101301020321230231210121310232131013123130232}{120103010210313213101210130102010302303101201031012321202123103132010}
\planeentry{55}{0102010301021013101213202101301020321230231210121310323123202123103123}{010302120230312302010301021013101213230102013212013032312320212310120}
\planeentry{56}{3231323032312320232120131232032310302010203231210130102010302303120103}{012023213230201303231323023212320210203231320121312101301020103132313}
\planeentry{57}{3231323032312320232120131232030102010313210130102010302303102013012132}{302103202120232132303231210213230323130201031213103020102032021310213}
\planeentry{58}{0102010301021013101213202101301020302012031321201230130323031321013032}{030102101310120232101301020103023031012013230313032010310121013032010}
\planeentry{59}{3231323032312320232120131201031321310212320323132303103023132302010203}{202123212010301020132120213230310203132303231202123210203012101232032}
\planeentry{60}{0102010301021013101213202101301020302012031231302030131210121320213230}{310121321202123203231303201213121012320210313230102030201213121013020}
\planeentry{61}{0102010301021013101213202101301020321323010212310313231301023031030212}{320301020103101231303202120230201030102132303231320212310131230231230}
\planeentry{62}{0102010301021013101213202101301023031303230120301213120231323032312012}{321020302010231301020103101213103020102032120323123201020131013230131}
\planeentry{63}{0102010301021013101213202101303231323020123203231323031030231320321201}{031230131213101201030102123120103010203132302120230313231301312023120}
\planeentry{64}{0102010301021013101213202101301020321323020120232123203213103020310120}{130230132021020302010310120302010230313032310213102010323031020103012}
\planeentry{65}{3231323032312320232120131232032313012010323121023020102032310302303121}{013032313230232102030131213103132303231201030102013121023202103102103}
\planeentry{66}{0102010301021013121020301030230131210203012321202123101201030203013212}{023132303231201213121020310120103230310201301213230102123203231030230}
\planeentry{67}{1021232010302321232032101312320312010301020320210310212320232132303231}{301312320323103020301023213103020310120103020301302321202321303132310}
\planeentry{68}{3231323032312320232120131232032310302030103213032120213102010301021321}{012313031323102032313230232120230313231301213010210132313202320103202}
\planeentry{69}{3231323032312320232120131232032313031321302012132103203010302012320301}{303231232023213101232032313230310302321320103020301323023212320301323}
\planeentry{70}{3231323032312320232120131232032313031321201310201303231232010320212312}{102031012131012032123032131012303231320301020103121303132023213103231}
\planeentry{71}{3231323032312320232120131232032313031321310232021202301032313012101310}{232021202301021013010232132030103020313230323121032303212023202101312}
\planeentry{72}{3231323032312320232120131232032313012103102123212023010210131210230210}{323031213103021031323032312320232120103231320121320301021013121023213}
\planeentry{73}{3231323032312320232120131232021013201303230313213102123203012130102030}{201203231232012131012010302012310120232010203231320232101231301320102}
\planeentry{74}{3231323032312320232120131232032313012103102123212023101202320210203101}{213230323123020120232123203231323031030232132302321012131210230201323}
\planeentry{75}{3231323032312320232120131232032313012010313213101210130120230313023213}{203103201312313031323023213031323103020301023120231323010302313230321}
\planeentry{76}{3231323032312320232120131232032313031321302102031303202123212013120103}{023212012131210130102030132120212321013123020103231303132120212320313}
\planeentry{77}{3231323032312320212313032303103202123130321012131210232132303130320121}{310201030102132120212313023213230103023132032120103231210130102303103}

\section{Four words for the line $\{0,1,x,x+2\}$}\label{S-app:line-words}
Here $x>1$, and labels $0,1,2,3$ are interpreted as $0,1,x,x+2$.
The four words are named $V_0,V_1,V_2,V_3$, consistently with the main article.
Each has length 167; read its rows consecutively.
\wordentry{$V_0$}{2312101323031012101301232021013021323032313013123023\\1210131012010301020320210130102303130323101202303130\\2321323031303203101213101203020102303130323010212310\\23020102312}
\wordentry{$V_1$}{0102010301021013101213202101303230313202123212010301\\0210132120103132120131231303231232021020301312320301\\3121012132021020301021013212010313210301032312320310\\32021020301}
\wordentry{$V_2$}{0102010301021013101213202101303231323020123203231323\\0310302313203212010312301312131012010301021231201030\\1020313230212023031323130131202312010301202320121323\\03101201301}
\wordentry{$V_3$}{0102010301021013101213202101303231232021013212023213\\2303231301203202131201030201213230323130131213101203\\2120103010210131213103132303103021013032312102321323\\02321310313}

For a label word $V$, define
\[
 X(V)=\{x>1:V\text{ contains an additive square on }\{0,1,x,x+2\}\}.
\]
The complete failure sets are
\begin{align*}
X(V_0)&=\{4/3,3/2,2,5/2,3,4,6\},\\
X(V_1)&=\{3/2,5/3,2,5/2,3,7/2,4,5,6,7\},\\
X(V_2)&=\{4/3,3/2,2,3,4,5,6,7,8\},\\
X(V_3)&=\{8/7,7/6,6/5,5/4,9/7,4/3,7/5,3/2,8/5,5/3,\\
&\qquad7/4,9/5,11/6,2,9/4,7/3,5/2,8/3,11/4,3,10/3,\\
&\qquad7/2,11/3,4,9/2,5,11/2,7,8,9,10,11,12,13\}.
\end{align*}
Their intersection is exactly $\{3/2,2,3,4\}$.

\section{Encoding and exact coverage checks}
The words in S1 use actual integer values. Those in S2 and S3 use labels,
with the numerical assignments specified separately. The electronic data store
unbroken strings; spaces, numbering and line breaks in the printed lists are
not symbols.

For each pair of adjacent equal-length blocks, the plane checker computes the
literal count differences and the equation $a+bx+cy=0$. A zero coefficient
vector causes the candidate word to be rejected, because it is invalid for
every alphabet. For a nonzero vector, coefficients are divided by their gcd
and the first nonzero coefficient is made positive. A line meets $1<x<y$
exactly when $(a+b+c,b+c,c)$ contains both signs. Whole lines and isolated
intersection points are then intersected in exact rational arithmetic, following
the induction in the main article. The original exceptions are checked separately.
This uses $77\cdot4830=371{,}910$ block comparisons; it does not sample the
real parameter domain.

For the one-parameter family, each comparison is $A+Bx$. The checker rejects
an identically zero comparison, ignores a nonzero constant, and retains the
root $-A/B$ when $B\ne0$ and the root exceeds one. The four words require
$4\cdot6972=27{,}888$ comparisons and give the complete root lists in S3.

Direct checks of the 13 integer words and their 52 one-letter extensions
establish the printed witnesses and right-maximality. These checks and the
parameter covers do not establish upper bounds on $g(T)$.

\section{Complete searches for the finite upper bounds}
For a prefix of length $n$, let $S_j$ be the sum of its first $j$ letters.
A newly created additive square after appending a letter must end at the new
last position. Its half-length $k$ satisfies
\[
 S_n-2S_{n-k}+S_{n-2k}=0,\qquad 1\le k\le\lfloor n/2\rfloor.
\]
The search tries every alphabet letter and rejects exactly the extensions
satisfying one of these equalities. Every surviving prefix is explored.
To certify $g(T)\le g$, the complete search through length $g+1$ must finish
with no surviving prefix of that length. A computation stopped by a time or
node budget does not certify this conclusion. Nor does merely stopping at a
depth where surviving prefixes remain.

For the values $62,86,88$, the completed searches have final depths $63,87,89$
and no surviving word in the final layer. Separate implementations for 86
and 88 exhaust all 3372 admissible depth-eight starting prefixes. The larger
searches use recursive prefix sums, forbidden next-letter values with disjoint
subtree partitions, and literal block sums. Programs and completed-run records
are in the accompanying archive.

The recorded CPU visit counts are below. A parallel total sums worker visits
and may count prefixes above the split more than once. It is not presented
as the same statistic as a distinct-prefix count.
\begin{center}\small
\begin{tabular}{lrrl}\toprule
Alphabet & $g$ & Recorded visits & Record type\\\midrule
$\{0,1,2,4\}$ &62&19,097,778&direct DFS\\
$\{0,1,2,5\}$ &86&299,596,819&direct DFS\\
$\{0,1,3,5\}$ &88&234,485,149&direct DFS\\
$\{0,1,3,6\}$ &110&2,338,999,809&direct DFS\\
$\{0,1,2,6\}$ &111&2,099,553,963&direct DFS\\
$\{0,2,3,6\}$ &113&1,259,604,111&direct DFS\\
$\{0,1,4,6\}$ &118&8,319,473,669&direct DFS\\
$\{0,2,3,7\}$ &139&38,588,475,291&direct DFS\\
$\{0,1,3,7\}$ &141&44,748,325,271&parallel summary\\
$\{0,2,4,7\}$ &145&23,643,323,271&direct DFS\\
$\{0,1,2,7\}$ &148&33,992,260,895&parallel summary\\
$\{0,1,4,7\}$ &161&57,410,902,539&parallel summary\\
$\{0,1,5,7\}$ &167&248,757,405,397&parallel summary\\\bottomrule
\end{tabular}\end{center}
The four diameter-six alphabets have two agreeing direct CPU counts; the
values 110,111,113 also have agreeing literal-block counts. For 148,161,167,
all fifteen individual subtree-partition outputs are complete, and their node
counts sum to the displayed totals.

In a partitioned search, admissible prefixes at a fixed depth are assigned to
workers by a deterministic residue counter. The assigned subtrees must be
disjoint and exhaustive. An upper-bound certificate requires every assigned
part with the same parameters to finish. A missing part or a budget-limited
run does not suffice. The archive specifies the partition, depth convention
and complete set of outputs for each claimed bound.

\paragraph{Use of generative AI}
Generative AI tools, including OpenAI Codex, assisted with mathematical
analysis, research programming, literature review and manuscript preparation.
The author has reviewed the manuscript and takes responsibility for its content.

\end{document}